\newtheorem{thm}{Theorem}
\DeclareMathOperator{\asc}{asc}
\DeclareMathOperator{\des}{des}
\title{Non-overlapping descents and ascents in stack-sortable permutations}
\author{Sergey Kitaev\footnote{Department of Mathematics and Statistics, University of Strathclyde, 26 Richmond Street, Glasgow G1 1XH, United Kingdom. 
{\bf Email:} sergey.kitaev@strath.ac.uk.}\ \ and Philip B. Zhang\footnote{College of Mathematical Science, Tianjin Normal University, Tianjin  300387, P. R. China  {\bf Email:} zhang@tjnu.edu.cn.}}
\begin{document}

\maketitle 

\noindent\textbf{Abstract.} 
The Eulerian polynomials $A_n(x)$ give the distribution of descents over permutations. It is also known that the distribution of descents over stack-sortable permutations (i.e.\ permutations sortable by a certain algorithm whose internal storage is limited to a single stack data structure) is given by the Narayana numbers  
$\frac{1}{n}{n \choose k}{n \choose k+1}$. On the other hand, as a corollary of a much more general result, the distribution of the statistic ``maximum number of non-overlapping descents'', MND, over all permutations is given by $\sum_{n,k\geq 0}D_{n,k}x^k\frac{t^n}{n!}=\frac{e^t}{1-x(1+(t-1)e^t)}$. 

In this paper, we show that the distribution of MND over stack-sortable permutations is given by $\frac{1}{n+1}{n+1\choose 2k+1}{n+k \choose k}$. We give two proofs of the result via bijections with rooted plane (binary) trees allowing us to control MND. Moreover, we show combinatorially that MND is equidistributed with the statistic MNA, the maximum number of non-overlapping ascents, over stack-sortable permutations. The last fact is obtained by establishing an involution on stack-sortable permutations that gives equidistribution of 8 statistics. \\

\noindent {\bf AMS Classification 2010:} 05A15

\noindent {\bf Keywords:}  stack-sortable permutation; descent; ascent; pattern-avoidance

\section{Introduction}\label{intro-sec}

A permutation of length $n$ is a rearrangement of the set $[n]:=\{1,2,\ldots,n\}$. Denote by  $S_n$  the set of permutations of $[n]$ and let $\varepsilon$ be the empty permutation. For a permutation $\pi=\pi_1\pi_2\cdots \pi_n$ with $\pi_i=n$,  the {\em stack-sorting operator} $\mathcal{S}$  is defined recursively as follows, where $\mathcal{S}(\varepsilon)=\varepsilon$,
$$\mathcal{S}(\pi) = \mathcal{S}(\pi_1\cdots \pi_{i-1})\,  \mathcal{S}(\pi_{i+1}\cdots \pi_n)\, n.$$
A permutation $\pi$ is {\em stack-sortable} (S-S) if $\mathcal{S}(\pi) =12\cdots n$. Let $SS_n$  denote the set of S-S permutations of length $n$. The stack-sorting operator appears in numerous studies in the mathematics and theoretical computer science literature (e.g.\ see \cite{Bona} and references therein). 

The permutations in $SS_n$ are counted by the Catalan numbers $C_n=\frac{1}{n+1}\binom{2n}{n}$, and they are precisely  the set of 231-avoiding permutations, where a permutation $\pi_1\pi_2\cdots\pi_n$ avoids a pattern $p=p_1p_2\cdots p_k$ (which is also a permutation) if there is no subsequence $\pi_{i_1}\pi_{i_2}\cdots\pi_{i_k}$ such that $\pi_{i_j}<\pi_{i_m}$ if and only if $p_j<p_m$ \cite{Kitaev2011Patterns,Knuth1969Art}.  Let $S_n(p)$ denote the set of $p$-avoiding permutations of length $n$. Then $SS_n=S_n(231)$. S-S permutations have a nice recursive structure: if $\pi=AnB\in S_n(231)$  then $A<B$ (i.e.\ every element in $A$ is less than any element in $B$) and $A$ and $B$ are (possibly empty) 231-avoiding permutations independent from each other. 

For a permutation $\pi=\pi_1 \pi_2 \cdots \pi_n$,  the {\em descent} (resp., {\em ascent}) {\em statistic} on $\pi$, $\des(\pi)$ (resp., $\asc(\pi)$), is defined as the number of $i\in [n-1]$ such that $\pi_i > \pi_{i+1}$ (resp., $\pi_i<\pi_{i+1}$). For example, des(562413)$=$2 and asc(35124)$=$3. The distribution of descents (or ascents) over $S_n$ is given by the  {\em Eulerian polynomial} 
\begin{align*}
A_n(x) := \sum_{\pi \in S_n}x^{\des (\pi)}=\sum_{k=1}^{n}k!S(n,k)(x-1)^{n-k}
\end{align*}
where $S(n,k)$ is the {\em Stirling numbers of the second kind}. On the other hand, the distribution of descents over $SS_n$ (i.e.\ 231-avoiding permutations) is given by the  Narayana numbers  $$\frac{1}{n}\binom{n}{k}\binom{n}{k+1}$$ (see  \cite{Stanley1989}). To complete the picture, the distribution of descents over 123-avoiding permutations is given by the following formula \cite{BBS2010,BKLPRW}, where $t$ and $x$ correspond to the length and the number of descents:
\begin{align}
\frac{-1+2tx+2t^2x-2tx^2-4t^2x^2+2t^2x^3+\sqrt{1-4tx-4t^2x+4t^2x^2}}{2tx^2(tx-1-t)}. \notag
\end{align}
The distribution for the number of  321-avoiding permutations of length $n$ with $k$ descents \cite[A091156]{oeis} is
\begin{align}\label{321-des}
\frac{1}{n+1}{n+1\choose k}\sum_{j=0}^{n-2k}{k+j-1\choose k-1}{n+1-k \choose n-2k-j}. 
\end{align}

The notion of the {\em maximum number of non-overlapping occurrences} of  a {\em consecutive pattern} (that is, occurrences of a pattern defined above with the requirement of $i_2=i_1+1$, $i_3=i_2+1$, etc) in a permutation has been considered in \cite{Kitaev2005}. It turns out that the distribution of the maximum number of non-overlapping occurrences of a consecutive pattern can be expressed in terms of the {\em exponential generating function} (e.g.f.) for the number of permutations {\em avoiding} the pattern. In particular, since the e.g.f. for permutations with no descents is clearly $e^t$, we have~\cite{Kitaev2005} that 
$$\sum_{n,k\geq 0}D_{n,k}x^k\frac{t^n}{n!}=\frac{e^t}{1-x(1+(t-1)e^t)}$$
where $D_{n,k}$ is the number of permutations with $k$ non-overlapping descents (but without $k+1$ non-overlapping descents). 

The main focus of this paper is the study of the {\em maximum number of non-overlapping descents} ({\em MND}) and {\em non-overlapping ascents} ({\em MNA}) over S-S permutations. For example, MND(13254)$=$MND(32154)=2 while 2=des(13254)$\neq$des(32154)=3. In Section~\ref{sec-MND-MNA} we will provide an involution on S-S permutations that preserves 8 statistics and shows that the distribution of MNA is the same as that of MND. Further, we will prove in two ways (in Sections~\ref{sec-binary-trees} and~\ref{sec-trees}), via establishing appropriate bijections with rooted plane (binary) trees, that the distribution of MND over S-S (i.e.\ 231-avoidable permutations) is given by 
\begin{align}\label{231-MND}
\frac{1}{n+1}{n+1\choose 2k+1}{n+k \choose k}.
\end{align}
The respective numbers are recorded in \cite[A108759]{oeis} and the formula is derived in \cite{Callan2005}.  Finally, in Section~\ref{open-directions} we suggest directions for further research. 

\section{Preliminaries}

In this section we provide necessary basic definitions and notation used in the paper.

\subsection{Trees.}\label{trees-sub}  A {\em rooted plane tree} or an {\em ordered tree}, consists of a set of vertices each of which has a (possibly empty) linearly ordered list of vertices associated with it called its {\em children}. One of the vertices of the tree is called the {\em root}. A vertex with no children is a {\em leaf}. A vertex in a rooted plane tree is {\em internal} if it is neither a leaf nor the root. Hence, the vertices are partitioned into three classes: root, internal vertices, leaves. Rooted plane trees can be produced as follows:
\begin{itemize}
\item A single vertex with no children is a rooted plane tree. That vertex is the root. 
\item If $T_1,\ldots,T_k$ is an ordered list of rooted plane trees with roots $r_1,\ldots,r_k$ and no vertices in common, then a rooted plane tree $T$ can be constructed by choosing an unused vertex $r$ to be the root, letting its $i$-th child be $r_i$ ($r_1,\ldots,r_k$ are not roots any more).
\end{itemize}

An internal vertex in a tree is {\em marked} if it has a leaf as its child.  The formula (\ref{231-MND}) gives the number of rooted plane trees with $n$ edges and $k$ marked vertices  \cite{Callan2005}. In this paper, we call these objects {\em marked trees}.  The same formula  counts  {\em full binary trees} (ordered trees where there are no or two children for each vertex) on $2n$ edges by the value $k$ of the following statistic $X$ described in \cite[A108759]{oeis}. Delete all right edges in a given full binary tree $T$ with $2n$ edges leaving the left edges in place. This partitions the left edges into line segments (or paths) of lengths say $\ell_1$, $\ell_2,\ldots,\ell_t$, with $\sum_{i=1}^{t} \ell_i = n$. Then $X(T) = \sum_{i=1}^{t} \lfloor \frac{\ell_i}{2} \rfloor$. This result is implicit in \cite{Sun2010}. For example, $X(T)=2$  for $T$ in Figure~\ref{example-thm-binary-fig}. Note that an interpretation of  (\ref{231-MND})  on {\em Dyck paths} can be given via a standard bijection with full binary trees \cite[A108759]{oeis}, but Dyck paths are not to be considered in this paper.

\subsection{Permutations.} Section~\ref{intro-sec} introduces the notion of a pattern-avoiding permutation, the structure of S-S permutations (i.e.\ 231-avoiding permutations) and the statistics asc, des, MNA and MND. In this paper, we  also need the notions of statistics from \cite{Kitaev2011Patterns} introduced in Table~\ref{tab-statistics}. For example, 
lmin(52341)$=$rmax(52341)$=$ rmin(413625)$=$3, ldr(526341)$=$rar(413625)$=$2. Moreover, the {\em reverse} of $\pi=\pi_1\pi_2\cdots\pi_n$ is the permutation $r(\pi)=\pi_n\pi_{n-1}\cdots\pi_1$ and the {\em complement} of $\pi$ is the permutation $c(\pi)$ obtained from $\pi$ by replacing each $\pi_i$ by $n+1-\pi_i$. For example, $r(31425)=52413$ and $c(31425)=35241$. For a sequence of distinct numbers $x=x_1x_2\cdots x_k$, the {\em reduced form} of $x$, red($x$), is obtained from $x$ by replacing the $i$-th smallest number by $i$. For example, red(2547)$=$1324. 

\begin{table}
\begin{center}
\begin{tabular}{l|l}
\hline
\hline
Stat & Definition on a permutation $\pi_1\pi_2\cdots\pi_n$\\
\hline
\hline
lmin & number of left-to-right minima = $|\{i\ |\ \pi_j>\pi_i \mbox{ for all }j<i\}|$\\
\hline
rmin & number of right-to-left minima = $|\{i\ |\ \pi_i<\pi_j \mbox{ for all }j>i\}|$ \\
\hline
ldr & length of leftmost decreasing run = max$\{i\ |\ \pi_1>\pi_2>\cdots>\pi_i \}$ \\
\hline
rar & length of rightmost ascending run = max$\{i\geq 1\ |\ \pi_n>\cdots >\pi_{n-i+1} \}$ \\
\hline
rmax & number of right-to-left maxima = $|\{i\ |\ \pi_i>\pi_j \mbox{ for all }j>i\}|$ \\
\hline

\end{tabular}
  \caption{Permutation statistics in this paper.}
\label{tab-statistics}
\end{center}
\end{table}

$k$-tuples of (permutation) statistics $(s_1,s_2,\ldots,s_k)$ and $(s'_1,s'_2,\ldots,s'_k)$ are equidistributed over a set $S$ if $$\sum_{a\in S}t_1^{s_1(a)}t_2^{s_2(a)}\cdots t_k^{s_k(a)}=\sum_{a\in S}t_1^{s'_1(a)}t_2^{s'_2(a)}\cdots t_k^{s'_k(a)}.$$  For example, (asc, des) and (des, asc) are equidistributed over $S_n$ for any $n\geq 0$, and this fact is trivial from applying the reverse (or complement) to all permutations in $S_n$. The fact that (MNA, MND) is equidistributed with (MND, MNA) over S-S permutations is not trivial, and it will follow from the more general equidistribution result (involving 8 statistics) in Section~\ref{sec-MND-MNA}.

\section{Equidistribution of MND and MNA over S-S permutations}\label{sec-MND-MNA}

The equidistribution of MND and MNA over S-S permutations follows from the following more general result. 

\begin{thm}\label{8-stat-equidistribution-thm} The following $8$-tuples of statistics are equidistributed over $S_n(231)$ for $n\geq 0$: 
$$\mbox{(MND, MNA, asc, des, ldr, rar, lmin, rmin)}$$
$$\mbox{(MNA, MND, des, asc, rar, ldr, rmin, lmin).}$$
\end{thm}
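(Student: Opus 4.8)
The plan is to construct an explicit involution $\phi: S_n(231) \to S_n(231)$ that swaps the two $8$-tuples statistic-by-statistic, i.e.\ $\text{MND}(\phi(\pi)) = \text{MNA}(\pi)$, $\text{MNA}(\phi(\pi)) = \text{MND}(\pi)$, $\text{asc}(\phi(\pi)) = \text{des}(\pi)$, and so on. The natural candidate comes from the recursive structure of $231$-avoiding permutations: if $\pi = AnB$ with $A < B$ and $A, B$ themselves $231$-avoiding, then one would like $\phi$ to reorganize $A$ and $B$ and recurse. A first guess, "reverse-complement then recurse," does not preserve $231$-avoidance globally, so instead I would look for an operation intrinsic to the tree-like decomposition. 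The cleanest route is to pass through the standard bijection between $S_n(231)$ and binary trees (or rooted plane trees): a $231$-avoiding permutation $\pi = AnB$ corresponds to a root with left subtree $T_A$ (from $A$) and right subtree $T_B$ (from $B$). I would then define $\phi$ on trees to be the operation that, at every node, swaps the left and right subtrees — i.e.\ the mirror image of the tree — and check that, reading back to a permutation, this realizes the desired exchange of all $8$ statistics.

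Concretely, first I would fix the bijection $\pi \leftrightarrow T(\pi)$ explicitly and record how each of the $8$ statistics is read off the tree. For instance, under the decomposition $\pi = AnB$: a descent occurs at the junction before $n$ exactly when $A \neq \varepsilon$, and an ascent after $n$ exactly when $B\neq\varepsilon$; $\text{ldr}(\pi)$ is governed by the leftmost branch of $T(\pi)$ and $\text{rar}(\pi)$ by the rightmost branch; $\text{lmin}$ and $\text{rmin}$ likewise have branch-length or branch-count interpretations. The pattern to establish is that each "left-flavored" statistic of $\pi$ (asc, ldr or rather its pairing, lmin $\ldots$) reads off the tree in a way that becomes the corresponding "right-flavored" statistic after the mirror operation, and vice versa — with $\text{MND}$ and $\text{MNA}$ swapping because non-overlapping descents in $\pi$ correspond to a certain matching/packing quantity on left edges of $T(\pi)$ while non-overlapping ascents correspond to the same quantity on right edges. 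Once these tree-readings are verified, the theorem is immediate: mirroring is an involution on binary trees, hence $\phi$ is a well-defined involution on $S_n(231)$, and it exchanges the two $8$-tuples term by term, which gives equidistribution (a stronger statement than equidistribution of the two joint distributions, since it is a bijective matching).

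The step I expect to be the main obstacle is pinning down the tree-interpretation of $\text{MND}$ (equivalently $\text{MNA}$) and proving the mirror operation swaps them. Descents, ascents, $\text{ldr}$, $\text{rar}$, $\text{lmin}$, $\text{rmin}$ all have fairly transparent local or branch-based descriptions that flip cleanly under mirroring; but "maximum number of non-overlapping descents" is a global optimization over the descent set, so I must argue that the optimal packing of non-overlapping descents in $\pi$ is computed by a quantity on $T(\pi)$ (plausibly $\sum \lfloor \ell_i/2\rfloor$ over maximal left-edge segments, mirroring the statistic $X$ on full binary trees already cited in the Preliminaries) that depends only on the "left shape" of the tree. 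Establishing this likely requires an induction on the recursive structure $\pi = AnB$: expressing $\text{MND}(\pi)$ in terms of $\text{MND}(A)$, $\text{MND}(B)$, and whether the descent at $A$–$n$ can be absorbed, then checking the same recursion holds tree-side. Getting the boundary bookkeeping right — when a descent straddling the $n$ can be paired with a descent inside $A$ versus when it must be counted separately — is the delicate point, but once that recursion matches the corresponding recursion for $\text{MNA}$ under the subtree swap, the rest of the proof is routine verification.
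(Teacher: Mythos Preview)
Your plan is correct and is essentially the paper's own proof: under the standard bijection $\pi=AnB\leftrightarrow$ (root, left subtree from $A$, right subtree from $\mathrm{red}(B)$), the tree-mirror map is exactly the paper's involution $f(AnB)=f(\mathrm{red}(B))\,n\,(f(A))^+$, and the paper verifies the eight statistic swaps by the same induction on $\pi=AnB$ with parity bookkeeping that you outline. Two small corrections to keep you on track: (i) in $\pi=AnB$ the junction $A\!\to\! n$ is an \emph{ascent} and $n\!\to\! B$ is the descent, so the parity governing $\mathrm{MND}$ is that of $\mathrm{ldr}(B)$ (and for $\mathrm{MNA}$ it is $\mathrm{rar}(A)$); (ii) your aside that $\mathrm{MND}$ reads as $X(T)=\sum\lfloor \ell_i/2\rfloor$ on the tree is not true under the \emph{standard} bijection---the paper needs the modified bijection $h^+$ of Section~\ref{sec-binary-trees} for that---so do the induction directly rather than via $X$.
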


\begin{proof} Recall that if $\pi=AnB\in S_n(231)$  then $A<B$. We define the following recursive map $f$ on $S_n(231)$ that is easy to check by induction on $n$ to be an involution (that is, $f^2$ is the identity map):   
\begin{itemize}
\item $f(\varepsilon)=\varepsilon$.
\item For $\pi=AnB\in S_n(231)$, $n\geq 1$, where $A$ and $B$ are possibly empty, $f(\pi)=f(\mbox{red}(B))n\big(f(A)\big)^+$ where $\big(f(A)\big)^+$ is formed by the largest elements in $f(\pi)$ excluding $n$ and red($\big(f(A)\big)^+)=f(A)$. That is, $f$ recursively swaps the largest and smallest elements in $\pi$ below the largest element. It is easy to see by induction on $n$ that $f(\pi)\in S_n(231)$.   
\end{itemize}
We prove by induction on $n$ that the map $f$ respects the statistics. The base cases of $n\in\{0,1\}$ are trivial, so assume $n\geq 2$. We now apply the inductive hypothesis in the following three cases (some of which, but not all, result in the same derivations):

\noindent \fbox{$A\neq \varepsilon$, $ B\neq \varepsilon$.}

ldr($\pi$)=ldr($A$)=rar($f(A$))=rar($f(\pi)$). 

rar($\pi$)=rar(red$(B)$)=ldr($f($red$(B)$))=ldr($f(\pi)$). 

lmin($\pi$)=lmin($A$)=rmin($f(A$))=rmin($f(\pi)$). 

rmin($\pi$)=rmin(red$(B)$)=lmin($f($red$(B)$))=lmin($f(\pi)$). 

asc($\pi$)=asc($A$)+1+asc(red($B$))=des($f(A)$)+1+des($f($red($B$)))=des($f(\pi)$).

des($\pi$)=des($A$)+1+des(red($B$))=asc($f(A)$)+1+asc($f($red($B$)))=asc($f(\pi)$).

Now, suppose that ldr(red$(B)$) is even. Then, because ldr($\pi$)=rar($f(\pi)$), the element $n$ does not contribute to MND($\pi$) and MNA($f(\pi)$), so we have

MND($\pi$)=MND($A$)+MND(red($B$))=MNA($f(A)$)+MNA($f($red($B$)))

=MNA($f(\pi)$).

On the other hand, if ldr(red$(B)$) is odd, then $n$ contributes one extra non-overlapping descent in $\pi$ and one extra non-overlapping ascent in $f(\pi)$:

MND($\pi$)=MND($A$)+1+MND(red($B$))=MNA($f(A)$)+1+MNA($f($red($B$)))

=MNA($f(\pi)$).

Similarly, suppose that rar($A$) is even. Then, because rar($\pi$)=ldr($f(\pi)$), the element $n$ does not contribute to MNA($\pi$) and MND($f(\pi)$), so we have

MNA($\pi$)=MNA($A$)+MNA(red($B$))=MND($f(A)$)+MND($f($red($B$)))

=MND($f(\pi)$).

Finally, if rar($A$) is odd, then $n$ contributes one extra non-overlapping ascent in $\pi$ and one extra non-overlapping descent in $f(\pi)$:

MNA($\pi$)=MNA($A$)+1+MNA(red($B$))=MND($f(A)$)+1+MND($f($red($B$)))

=MND($f(\pi)$).

\noindent \fbox{$A = \varepsilon$, $ B\neq \varepsilon$.}

ldr($\pi$)=1+ldr(red($B$))=1+rar($f$(red($B$)))=rar($f(\pi)$). 

rar($\pi$)=rar(red$(B)$)=ldr($f($red$(B)$))=ldr($f(\pi)$). 

lmin($\pi$)=1+lmin(red($B$))=1+rmin($f$(red($B$)))=rmin($f(\pi)$). 

rmin($\pi$)=rmin(red$(B)$)=lmin($f($red$(B)$))=lmin($f(\pi)$). 

asc($\pi$)=asc(red($B$))=des($f($red($B$)))=des($f(\pi)$).

des($\pi$)=1+des(red($B$))=1+asc($f($red($B$)))=asc($f(\pi)$).

Now, suppose that ldr(red$(B)$) is even. Then, because ldr($\pi$)=rar($f(\pi)$), the element $n$ does not contribute to MND($\pi$) and MNA($f(\pi)$), so we have

MND($\pi$)=MND(red($B$))=MNA($f($red($B$)))=MNA($f(\pi)$).

However, if ldr(red$(B)$) is odd, then $n$ contributes one extra non-overlapping descent in $\pi$ and one extra non-overlapping ascent in $f(\pi)$:

MND($\pi$)=1+MND(red($B$))=1+MNA($f($red($B$)))=MNA($f(\pi)$).

Finally, MNA($\pi$)=MNA(red($B$))=MND($f($red($B$)))=MND($f(\pi)$).

\noindent \fbox{$A\neq \varepsilon$, $ B= \varepsilon$.}

ldr($\pi$)=ldr($A$)=rar($f(A$))=rar($f(\pi)$). 

rar($\pi$)=1+rar($A$)=1+ldr($f(A)$)=ldr($f(\pi)$). 

lmin($\pi$)=lmin($A$)=rmin($f(A$))=rmin($f(\pi)$). 

rmin($\pi$)=1+rmin($A$)=1+lmin($f(A)$)=lmin($f(\pi)$). 

asc($\pi$)=asc($A$)+1=des($f(A)$)+1=des($f(\pi)$).

des($\pi$)=des($A$)=asc($f(A)$)=asc($f(\pi)$).

Now, MND($\pi$)=MND($A$)=MNA($f(A)$)=MNA($f(\pi)$).

Further, suppose that rar($A$) is even. Then, because rar($\pi$)=ldr($f(\pi)$), the element $n$ does not contribute to MNA($\pi$) and MND($f(\pi)$), so we have

MNA($\pi$)=MNA($A$)=MND($f(A)$)=MND($f(\pi)$).

Finally, if rar($A$) is odd, then $n$ contributes one extra non-overlapping ascent in $\pi$ and one extra non-overlapping descent in $f(\pi)$:

MNA($\pi$)=MNA($A$)+1=MND($f(A)$)+1=MND($f(\pi)$).

This completes our proof.
\end{proof}

\section{MND on S-S permutations and binary trees}\label{sec-binary-trees}

To achieve the desired result, we need the following theorem, the proof of which introduces the bijection $g$ on S-S permutations.

\begin{thm}\label{ldr-rmax-thm} The statistics ldr and rmax are equidistributed on S-S permutations. \end{thm}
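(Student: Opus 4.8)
The plan is to prove this bijectively, producing the map $g$ referred to above; it will satisfy rmax($\pi$)$=$ldr($g(\pi)$), and since $g$ is a bijection of $S_n(231)$ this gives the equidistribution. Everything rests on two ways of peeling off a $231$-avoiding permutation. First, for $\pi=AnB\in S_n(231)$ one has rmax($\pi$)$=1+$rmax(red($B$)): only entries occurring after $n$ can be right-to-left maxima of $\pi$, since every entry of $B$ exceeds every entry of $A$, and $n$ itself is always one; note this recursion is blind to $A$. Second, I would record the companion decomposition obtained by cutting at the \emph{first} letter: if $\sigma\in S_n(231)$ and $\sigma_1=m$, then $231$-avoidance forces all values $<m$ to appear, in $231$-avoiding order, immediately after $\sigma_1$ and before every value $>m$, so $\sigma=m\,P\,Q$ with $P\in S_{m-1}(231)$ on $\{1,\dots,m-1\}$ and $Q$ a $231$-avoiding permutation of $\{m+1,\dots,n\}$; and then ldr($\sigma$)$=1+$ldr(red($P$)) in every case (for $m=1$ both sides equal $1$ since $P=\varepsilon$; otherwise the leftmost decreasing run of $\sigma$ starts with the descent $m>\sigma_2$ and proceeds exactly as the run of $P$ does, never extending past $P$ because the global minimum $1$ lies in $P$ and is followed by a larger entry). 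Thus ldr is ``uniform'' under the second peeling just as rmax is under the first.

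With these in place I would define $g$ by $g(\varepsilon)=\varepsilon$ and, for $\pi=AnB$,
\[
g(\pi)\;=\;(|B|+1)\;\,g(\mathrm{red}(B))\;\,A^{\uparrow},
\]
where $A^{\uparrow}$ is $A$ with every entry increased by $|B|+1$; so $g(\mathrm{red}(B))$ occupies the values $\{1,\dots,|B|\}$, the leading letter is $|B|+1$, and $A^{\uparrow}$ occupies the top $|A|$ values. Three things then need checking. (i) $g(\pi)\in S_n(231)$: any $231$ pattern must lie inside $g(\mathrm{red}(B))$ or inside $A^{\uparrow}$ (both avoid $231$ by induction), and a pattern spanning the blocks is impossible because the low block entirely precedes the high block, so no high value sits between two low positions and the leading letter cannot begin a $231$ either. (ii) $g$ is a bijection: from $\sigma\in S_n(231)$ read $m'=\sigma_1$, set $P'=\sigma_2\cdots\sigma_{m'}$ and $Q'=\sigma_{m'+1}\cdots\sigma_n$, recover red($B$)$=g^{-1}(P')$ and $A=$ red($Q'$), and reassemble $\pi=AnB$; one checks this is a two-sided inverse. (iii) The statistic transfer, by induction on $n$: the definition realizes $g(\pi)$ exactly in the form $m\,P\,Q$ of the second peeling with $P=g(\mathrm{red}(B))$, so by that peeling's formula, then the induction hypothesis, then the rmax recursion, ldr($g(\pi)$)$=1+$ldr($g(\mathrm{red}(B))$)$=1+$rmax(red($B$))$=$rmax($\pi$).

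The step I expect to be the genuine obstacle is seeing that such a recursive $g$ can exist at all. Under the decomposition $\pi=AnB$ that governs rmax, the statistic ldr is \emph{not} uniform — ldr($AnB$) equals ldr($A$) when $A\neq\varepsilon$ but $1+$ldr(red($B$)) only when $A=\varepsilon$ — so a bijection built naively on that single decomposition stalls. The point is that ldr does become uniform under the cut-at-the-first-letter decomposition, and $g$ must be arranged to translate one peeling into the other: it reroutes the rmax-carrying block $B$ of $\pi$ into the ldr-carrying block $P$ of $g(\pi)$ and parks $A$ in the ldr-irrelevant top block $Q$. Once this structural idea is found, the remaining work (the value shifts, $231$-avoidance, invertibility) is routine. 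For completeness I note a non-bijective shortcut is also available: the two recursions give $\sum_{\pi}x^{\mathrm{rmax}(\pi)}t^{|\pi|}=\tfrac{1}{1-xtC(t)}$ and $\sum_{\pi}x^{\mathrm{ldr}(\pi)}t^{|\pi|}=\tfrac{1-tC(t)}{1-xt-tC(t)}$ with $C(t)=\sum_{n\ge0}C_nt^n$, and the Catalan identity $tC(t)^2=C(t)-1$ (equivalently $1-tC(t)=1/C(t)$) makes these two series equal — but this route does not produce the map $g$ that the rest of this section relies on.
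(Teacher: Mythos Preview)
Your argument is correct and yields a valid bijection $g$ on $S_n(231)$ with $\mathrm{rmax}(\pi)=\mathrm{ldr}(g(\pi))$, but it is not the same construction the paper uses, and in fact it does not produce the same map. The paper's $g$ is built in one step from two \emph{global} decompositions of a $231$-avoider: writing $\pi$ as $B_1\,n\,B_2\,(n{-}1)\cdots B_\ell\,(n{-}\ell{+}1)$ along its full sequence of right-to-left maxima, and writing the target as $x_\ell x_{\ell-1}\cdots x_1\,A_1A_2\cdots A_\ell$ along its full leftmost decreasing run (equivalently, its left-to-right minima, which in a $231$-avoider are exactly the initial decreasing run). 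The bijection simply matches blocks, taking $\mathrm{red}(A_i)=\mathrm{red}(B_i)$, so $\mathrm{rmax}=\ell$ goes to $\mathrm{ldr}=\ell$ directly. Your $g$ instead peels off one right-to-left maximum at a time via $\pi=AnB$ and rebuilds using the cut-at-first-letter decomposition $\sigma=mPQ$; this is the recursive counterpart of the same idea. Already on $\pi=132$ the two maps differ (the paper's $g$ gives $312$, yours gives $213$).

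What each buys: the paper's version is a single explicit formula once the two structural forms are in hand, and it makes the $\ell$-to-$\ell$ matching of the statistics immediate. Your recursive version dovetails nicely with the inductive proofs elsewhere in the paper and requires no separate structural lemma about where left-to-right minima sit in a $231$-avoider. For the downstream use in Theorem~\ref{thm-biary-trees-X-MND}, only the property $\mathrm{rmax}(\pi)=\mathrm{ldr}(g(\pi))$ and bijectivity are invoked, so your $g$ serves just as well there. Your generating-function aside is a pleasant independent check, though as you note it does not supply the map.
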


\begin{proof} It is easy to see using the fact that S-S permutations are precisely 231-avoiding permutations that the structure of any S-S permutation  is $x_{\ell}x_{\ell-1}\cdots x_2x_1A_1A_2\cdots A_{\ell-1}A_{\ell}$ where 
\begin{itemize}
\item $x_i$'s are left-to-right minima (in particular, $x_1=1$);
\item $1=x_1<A_1<x_2<A_2<\cdots <x_{\ell}<A_{\ell}$ (i.e., the elements in $A_i$ are a permutation of the set $\{x_i+1,x_i+2,\cdots, x_{i+1}-1\}$ assuming $x_{\ell+1}:=n+1$);
\item each $A_i$ is a (possibly empty) $231$-avoiding permutation. 
\end{itemize} 

On the other hand, to avoid the pattern $231$, the structure of any S-S permutation is $B_1nB_2(n-1)\cdots B_{\ell}(n-\ell+1)$ where 
\begin{itemize}
\item $n(n-1)\cdots (n-\ell+1)$ is the sequence of right-to-left maxima formed by the largest elements;
\item $B_1<B_2<\cdots<B_{\ell}$; 
\item each $B_i$ is a (possibly empty) $231$-avoiding permutation. 
\end{itemize}
The map $g$ takes an S-S permutation $\pi$ with rmax$(\pi)=\ell$ and the structure described above and sends it to the S-S permutation $g(\pi)$ with ldr$(g(\pi))=\ell$ and the structure described above so that red$(A_i)=$red$(B_i)$ for $i=1,2,\ldots,\ell$. That is straightforward to see that $g$ is injective and surjective and hence bijective proving the equidistribution of ldr and rmax.  
\end{proof} 

The fact that the distribution of MND on S-S permutations is given by (\ref{231-MND}) follows from the next theorem (recall the definition of the statistic X in Section~\ref{trees-sub}). We refer to Figure~\ref{example-thm-binary-fig} for an illustration of the steps in the proof of Theorem~\ref{thm-biary-trees-X-MND}.

\begin{figure}
\begin{center}
\begin{tabular}{cccccccc}
$T=$ & 
\hspace{-0.8cm} \begin{minipage}[c]{8em}\scalebox{0.8}{
\begin{tikzpicture}[node distance=1cm,auto,main node/.style={circle,draw,inner sep=1pt,minimum size=2pt}]

\node[style={circle,fill,draw,inner sep=2pt,minimum size=2pt}] (6) {};
\node[style={circle,fill,draw,inner sep=2pt,minimum size=2pt}] (5) [below left of=6] {};
\node[style={circle,fill,draw,inner sep=2pt,minimum size=2pt}] (1) [below right of=6] {};
\node[style={circle,fill,draw,inner sep=2pt,minimum size=2pt}] (4) [xshift=-0.1cm, below left of=5] {};
\node[style={circle,fill,draw,inner sep=2pt,minimum size=2pt}] (3) [xshift=-0.2cm, below right of=5] {};
\node[style={circle,fill,draw,inner sep=2pt,minimum size=2pt}] (2) [xshift=0.3cm, below left of=3] {};
\node[style={circle,fill,draw,inner sep=2pt,minimum size=2pt}] (a) [xshift=-0.2cm, below right of=3] { };
\node[style={circle,fill,draw,inner sep=2pt,minimum size=2pt}] (b) [xshift=0.3cm, below left of=1] { };
\node[style={circle,fill,draw,inner sep=2pt,minimum size=2pt}] (c) [xshift=-0.2cm, below right of=1] { };
\node[style={circle,fill,draw,inner sep=2pt,minimum size=2pt}] (d) [xshift=0.2cm, below left of=4] { };
\node[style={circle,fill,draw,inner sep=2pt,minimum size=2pt}] (e) [xshift=-0.3cm, below right of=4] { };
\node[style={circle,fill,draw,inner sep=2pt,minimum size=2pt}] (f) [xshift=0.2cm, below left of=2] { };
\node[style={circle,fill,draw,inner sep=2pt,minimum size=2pt}] (g) [xshift=-0.2cm, below right of=2] { };

\path
(2) edge (f)
(2) edge (g)
(4) edge (d)
(4) edge (e)
(6) edge (5)
(6) edge (1)
(5) edge (4)
(5) edge (3)
(3) edge (2)
(3) edge (a)
(1) edge (b)
(1) edge (c);
\end{tikzpicture}
}
\end{minipage}
& \hspace{-0.5cm} $\mapsto$ & 
\hspace{-0.7cm}
\begin{minipage}[c]{8em}\scalebox{0.8}{
\begin{tikzpicture}[node distance=1cm,auto,main node/.style={circle,draw,inner sep=1pt,minimum size=2pt}]

\node[main node] (6) {6};
\node[main node] (5) [below left of=6] {5};
\node[main node] (1) [below right of=6] {1};
\node[main node] (4) [xshift=-0.1cm, below left of=5] {4};
\node[main node] (3) [xshift=-0.2cm, below right of=5] {3};
\node[main node] (2) [xshift=0.3cm, below left of=3] {2};
\node[style={circle,fill,draw,inner sep=2pt,minimum size=2pt}] (a) [xshift=-0.2cm, below right of=3] { };
\node[style={circle,fill,draw,inner sep=2pt,minimum size=2pt}] (b) [xshift=0.3cm, below left of=1] { };
\node[style={circle,fill,draw,inner sep=2pt,minimum size=2pt}] (c) [xshift=-0.2cm, below right of=1] { };
\node[style={circle,fill,draw,inner sep=2pt,minimum size=2pt}] (d) [xshift=0.2cm, below left of=4] { };
\node[style={circle,fill,draw,inner sep=2pt,minimum size=2pt}] (e) [xshift=-0.3cm, below right of=4] { };
\node[style={circle,fill,draw,inner sep=2pt,minimum size=2pt}] (f) [xshift=0.2cm, below left of=2] { };
\node[style={circle,fill,draw,inner sep=2pt,minimum size=2pt}] (g) [xshift=-0.2cm, below right of=2] { };

\path
(2) edge (f)
(2) edge (g)
(4) edge (d)
(4) edge (e)
(6) edge (5)
(6) edge (1)
(5) edge (4)
(5) edge (3)
(3) edge (2)
(3) edge (a)
(1) edge (b)
(1) edge (c);
\end{tikzpicture}
}
\end{minipage}
& \hspace{-0.5cm} \begin{tabular}{c} {\small $h$} \\[-2mm] $\mapsto$ \end{tabular} & 
\hspace{-0.4cm}163254 & 
\hspace{-0.4cm} \begin{tabular}{c} {\small $h^+$} \\[-2mm] $\mapsto$ \end{tabular} & \hspace{-0.5cm} 165243
\end{tabular}
\caption{An illustration of the steps in the proof of Theorem~\ref{thm-biary-trees-X-MND}. Note that $X(T)=\lfloor\frac{3}{2}\rfloor+\lfloor\frac{2}{2}\rfloor+\lfloor\frac{1}{2}\rfloor=2=$MND($165243$).}\label{example-thm-binary-fig}
\end{center}
\end{figure}
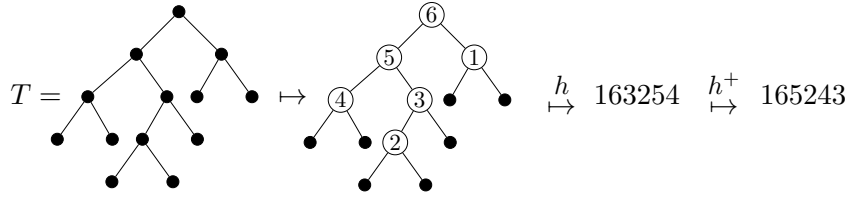

\begin{thm}\label{thm-biary-trees-X-MND} The set $\{ T\ |\ T \mbox{ is a full binary tree with }2n \mbox{ edges; }X(T)=k\}$ is in one-to-one correspondence with  the set \\ $\{\pi\ |\ \pi \mbox{ is an S-S permutation of length }n; \mbox{MND}(\pi)=k\}$.
\end{thm}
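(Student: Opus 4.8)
The plan is to prove the theorem by exhibiting a recursive bijection, exploiting the fact that both $X$ and $\mathrm{MND}$ obey the \emph{same} recursion once we peel off a single ``floor‑of‑half‑a‑length'' term on each side. The first ingredient is an elementary count on the permutation side: if $\pi$ has maximal decreasing runs of lengths $d_1,\dots,d_s$ (these partition the positions, $\sum_j d_j=|\pi|$), then $\mathrm{MND}(\pi)=\sum_j\lfloor d_j/2\rfloor$. Indeed, an occurrence of a descent at position $i$ uses positions $\{i,i+1\}$, so two descents are non‑overlapping exactly when their positions differ by at least $2$; inside a run of length $d$ the $d-1$ descents occupy consecutive positions, so a largest non‑overlapping set among them has size $\lceil(d-1)/2\rceil=\lfloor d/2\rfloor$, whereas descents in different runs are separated by the ascent between the runs and hence are automatically non‑overlapping. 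I would then invoke the structural description of S‑S permutations already used in the proof of Theorem~\ref{ldr-rmax-thm}: a nonempty S‑S permutation is uniquely $\pi=x_\ell x_{\ell-1}\cdots x_1\,A_1A_2\cdots A_\ell$ with $1=x_1<A_1<x_2<A_2<\cdots<x_\ell<A_\ell$ and each $A_i$ an S‑S permutation. Here $x_\ell\cdots x_1$ is the leftmost maximal decreasing run, of length $\ell$, and since $x_1<A_1$ and $A_i<A_{i+1}$ there is an ascent at every block boundary; hence the multiset of run‑lengths of $\pi$ is $\{\ell\}$ together with the run‑lengths of the $A_i$, and combining this with the count above gives
\[
\mathrm{MND}(\pi)=\Big\lfloor\frac{\ell}{2}\Big\rfloor+\sum_{i=1}^{\ell}\mathrm{MND}(A_i).
\]

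On the tree side I would decompose a full binary tree $T$ with $n\ge 1$ internal vertices along its leftmost maximal left‑path $v_0\to v_1\to\cdots\to v_\ell$, where $v_0$ is the root, $v_\ell$ is a leaf, $v_0,\dots,v_{\ell-1}$ are internal, and $v_i$ is the left child of $v_{i-1}$; hanging off it are the right subtrees $R_0,\dots,R_{\ell-1}$, with $R_i$ rooted at the right child of $v_i$. The pair $\bigl(\ell,(R_0,\dots,R_{\ell-1})\bigr)$ determines $T$. Deleting all right edges of $T$ turns the left‑path $v_0\cdots v_\ell$ into an isolated path of length $\ell$ (contributing $\lfloor\ell/2\rfloor$ to $X$), while disconnecting each $R_i$ from the path (its edge up to $v_i$ is a right edge) and leaving the internal left‑path structure of $R_i$ untouched; hence $X(T)=\lfloor\ell/2\rfloor+\sum_{i=0}^{\ell-1}X(R_i)$.

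Comparing this with the displayed identity for $\mathrm{MND}$, the bijection $\Phi$ is forced: $\Phi(\text{single leaf})=\varepsilon$, and $\Phi$ sends the tree with data $\bigl(\ell,(R_0,\dots,R_{\ell-1})\bigr)$ to the S‑S permutation with leftmost‑run length $\ell$ and blocks $A_1,\dots,A_\ell$ determined by $\operatorname{red}(A_i)=\Phi(R_{i-1})$. Both structural descriptions are bijective (every full binary tree decomposes uniquely as above, and so does every nonempty S‑S permutation), so $\Phi$ is a well‑defined bijection between full binary trees with $2n$ edges and S‑S permutations of length $n$; an immediate induction on $n$ using the two recursions yields $X(T)=\mathrm{MND}(\Phi(T))$, and restricting to the classes $X(T)=k$ and $\mathrm{MND}(\pi)=k$ proves the theorem. (If one wants only the enumeration and not the bijection, the two recursions show that the bivariate generating function $\Psi$ — with $t$ marking $n$ and $x$ marking $X$, respectively $\mathrm{MND}$ — satisfies $\Psi=1+\sum_{\ell\ge 1}(t\Psi)^\ell x^{\lfloor\ell/2\rfloor}$ in both cases, so the two series coincide, which together with the interpretation of (\ref{231-MND}) recalled in Section~\ref{trees-sub} recovers the explicit formula.)

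The step that I expect to require the most care is checking that the two decompositions line up exactly. On the tree side one must confirm that the leftmost maximal left‑path is the right thing to strip off: it runs through the root, so after deleting right edges it survives as a genuine maximal path and contributes precisely $\lfloor\ell/2\rfloor$, and the pieces $R_i$ are detached cleanly with their own $X$‑values intact. Dually, on the permutation side one must confirm that no maximal decreasing run of $\pi$ can straddle a boundary $x_1\mid A_1$ or $A_i\mid A_{i+1}$ — which comes down to the value inequalities $x_1<A_1<x_2<\cdots<x_\ell<A_\ell$ forcing an ascent exactly at those spots. Beyond pinning down this case analysis (and the empty‑block and single‑leaf base cases), I do not anticipate a real obstacle: the remainder is the standard Catalan‑type recursion plus the elementary run count.
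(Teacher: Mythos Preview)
Your argument is correct, and it proceeds along a genuinely different route from the paper's. The paper first builds the standard root-split bijection $h(T)=h(T_R)\,n\,h(T_L)$, observes that $\mathrm{lpath}(T)=\mathrm{rmax}(h(T))$, and then post-composes with the auxiliary bijection $g$ of Theorem~\ref{ldr-rmax-thm} so as to convert the $\mathrm{rmax}$-datum into an $\mathrm{ldr}$-datum; the induction then splits on the parity of $\mathrm{lpath}(T)$. By contrast, you strip off the whole leftmost maximal left-path in one go and match it directly to the leftmost decreasing run via the $x_\ell\cdots x_1A_1\cdots A_\ell$ structure; the identity $\mathrm{MND}(\pi)=\sum_j\lfloor d_j/2\rfloor$ that you begin with makes the parallel with the definition of $X$ transparent, so no corrective bijection is needed. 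The resulting map is not the same as $h^+$ (for the tree in Figure~\ref{example-thm-binary-fig} your $\Phi$ yields $631254$ rather than $165243$), but both carry $X$ to $\mathrm{MND}$. Your approach is more self-contained and explains conceptually why the two statistics agree; the paper's has the advantage of reusing Theorem~\ref{ldr-rmax-thm} and the classical root-split bijection. The care points you flag (boundary ascents, empty $A_i$'s, the single-leaf base case) are exactly the right ones, and they go through as you indicate.
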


\begin{proof}  Label the non-leaf vertices in a full binary tree $T$ with $2n$ edges using the pre-oder traversal (that is, visiting the leftmost yet unvisited vertices first) and decreasing order of labels starting from $n$. Then $T=T_LnT_R$ where $n$ is the root, $T_L$ is the left subtree and $T_R$ is the right subtree. Now, define recursively the map $h$ from labeled full binary trees to 231-avoiding permutations by $h(T):=h(T_R)nh(T_L)$, and as the base case, $h$ maps the one-vertex tree to $\varepsilon$. In particular, if $h(T_L)\neq\varepsilon$,  the permutation $h(T_L)$ is formed by the largest elements below $n$ and hence the outcome is a 231-avoiding permutation. Note that if $h(T)=AnB$, then $h^+(T):=Ang(B)$ where $g$ is defined in the proof of Theorem~\ref{ldr-rmax-thm} and by $g(B)$ we actually mean  first computing $B'=g(\mbox{red}(B))$ and then taking the order-isomorphic to $B'$ permutation formed by the largest elements below $n$. Clearly, $h^+$ is a bijection. Also, note that lpath($T$)=rmax($h(T)$), where lpath($T$) is the length of the leftmost path in $T$, that is, the number of edges on the path from the root to the leftmost leaf.

Next, we prove by induction on $n$, with the obvious base cases of $n=0,1$, that if $X(T)=k$ then MND$(h^+(T))=k$. We consider two cases given by the parity of lpath($T$). Suppose that lpath($T$) is odd. Then, by definition of $X$, $X(T)=X(T_LnT_R)=$
$$X(T_L)+X(T_R)=\mbox{MND}(h^+(T_L))+\mbox{MND}(h^+(T_R))=\mbox{MND}(h^+(T))$$ where the last equality follows from the fact that rmax($h(T_L)$)$=$ldr($h^+(T_L)$) is even and the element $n$ in $h^+(T)$ does not increase MND. On the other hand, if lpath($T$) is even then $X(T)=X(T_LnT_R)=$
$$X(T_L)+1+X(T_R)=\mbox{MND}(h^+(T_L))+1+\mbox{MND}(h^+(T_R))=\mbox{MND}(h^+(T))$$ where the last equality follows from the fact that rmax($h(T_L)$)$=$ldr($h^+(T_L)$) is odd and the element $n$ in $h^+(T)$, together with the leftmost decreasing run in $h^+(T_L)$, creates an extra occurrence of a non-overlapping descent in $h^+(T)$, and the theorem is proved.
\end{proof}

\section{MND on S-S permutations and marked trees}\label{sec-trees}

This section gives an alternative proof (to the one presented in Section~\ref{sec-binary-trees}) of the fact that the distribution of MND on S-S permutations is given by (\ref{231-MND}), and we present it in the next theorem. (Recall relevant definitions in Section~\ref{trees-sub}). The bijection in the proof of Theorem~\ref{thm-MND} is the most involved out of the three bijections in this paper. 

\begin{thm}\label{thm-MND} The set $$\{ T\ |\ T \mbox{ is a rooted plane tree with }n \mbox{ edges and }k\mbox{ marked vertices}\}$$ is in one-to-one correspondence with  the set  $$\{\pi\ |\ \pi \mbox{ is an S-S permutation of length }n; \mbox{MND}(\pi)=k\}.$$\end{thm}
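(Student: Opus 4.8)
The plan is to build a bijection between rooted plane trees with $n$ edges and S-S permutations of length $n$ that tracks marked vertices on the tree side and non-overlapping descents on the permutation side. The natural starting point is the standard recursive decomposition of both objects. A nonempty rooted plane tree $T$ with root $r$ has an ordered list of subtrees $T_1,\ldots,T_m$ hanging off $r$; on the permutation side, recall from Section~\ref{intro-sec} that a nonempty S-S permutation is $\pi=AnB$ with $A<B$ and $A,B$ both $231$-avoiding. To get a clean correspondence, the idea is to iterate the $AnB$ decomposition: every $231$-avoiding permutation is uniquely $B_1nB_2(n-1)\cdots B_\ell(n-\ell+1)$ as in the proof of Theorem~\ref{ldr-rmax-thm}, where $B_1<B_2<\cdots<B_\ell$ are the ``chunks'' sitting below the chain of right-to-left maxima. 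This $\ell$-fold decomposition should be matched against the list $T_1,\ldots,T_m$ of subtrees at the root (or, more likely, against the leftmost path of $T$ and the subtrees hanging off it), so that the recursion unwinds one level of the tree at a time.

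The subtle point — and the reason the statement is phrased with the statistic ``marked vertex'' (an internal vertex with a leaf child) — is that an extra non-overlapping descent is created exactly when a decreasing run has odd length, so one must arrange that the parity of certain path-lengths in the tree matches the parity data controlling MND. Concretely, I expect to first establish a refinement analogous to Theorem~\ref{thm-biary-trees-X-MND}: when $\pi=AnB$ (written with the chain of right-to-left maxima made explicit), MND$(\pi)$ equals MND$(A)+$MND$(B)$ plus a correction term that is $1$ when the leftmost decreasing run of the relevant block has odd length and $0$ otherwise, exactly as in the binary-tree proof. On the tree side, deleting a marked vertex (or toggling whether the root has a leaf child) should change the marked-vertex count by the matching amount. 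So the recursion should read, schematically, (number of marked vertices of $T$) $=$ $\sum_i$(marked vertices of $T_i$) $+ \varepsilon(T)$, with $\varepsilon(T)\in\{0,1\}$ a parity indicator of the leftmost path, and the bijection is forced to send this $\varepsilon(T)$ to the analogous $\{0,1\}$ correction in the MND recursion for $\pi=h(T)$.

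The concrete construction I would carry out: label the non-root, non-leaf vertices of $T$ by a suitable traversal in decreasing order starting from $n$ (mirroring the labelling in Theorem~\ref{thm-biary-trees-X-MND}), read off a $231$-avoiding permutation recursively by the rule $\phi(T)=\phi(T_m)\cdots\phi(T_1)$ interleaved with the labels on the leftmost path (so that the largest labels sit in the leftmost decreasing run, as the $x_\ell x_{\ell-1}\cdots x_1$ structure in Theorem~\ref{ldr-rmax-thm} demands), then compose with the $g$-type ``largest/smallest swap'' maps $f$ and $g$ from Sections~\ref{sec-MND-MNA} and~\ref{sec-binary-trees} if a parity adjustment is needed to turn ldr-parities into the marked/unmarked dichotomy. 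Injectivity and surjectivity follow by induction on $n$ from the uniqueness of both recursive decompositions, and the MND$=$marked count claim follows from the parity bookkeeping above.

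The main obstacle I anticipate is getting the parity correction to line up \emph{simultaneously} across all $\ell$ blocks of the decomposition rather than just at the top level: a leaf child of an internal vertex deep in $T$ must correspond to an odd-length decreasing run somewhere specific in $\pi$, and because marked vertices and non-overlapping descents are both ``global, parity-sensitive'' statistics, a naive recursion can double-count or miss a contribution when two blocks meet. Handling this cleanly will likely require either the auxiliary statistics in Table~\ref{tab-statistics} (ldr, rar, rmin, etc., exactly the ones appearing in Theorem~\ref{8-stat-equidistribution-thm}) to carry enough information through the induction, or an intermediate object (a decorated tree) interpolating between the two sides — which is presumably why the authors call this ``the most involved'' of the three bijections.
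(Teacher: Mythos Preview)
Your high-level plan (recursive bijection, track a parity statistic through the induction) is right, but two concrete choices in your proposal would not work, and you are missing the invariant that actually makes the argument go through.

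First, you propose to label the non-root, \emph{non-leaf} vertices of $T$. A rooted plane tree with $n$ edges has exactly $n$ non-root vertices, and the paper labels \emph{all} of them (leaves included) by the pre-order traversal in decreasing order; otherwise you do not get a permutation of $[n]$. Second, you propose an $\ell$-fold unfolding matching the children $T_1,\ldots,T_m$ of the root against the blocks $B_1,\ldots,B_\ell$ of the right-to-left-maxima decomposition, then composing with the map $g$ from Theorem~\ref{ldr-rmax-thm}. That is the device of the \emph{binary-tree} proof (Theorem~\ref{thm-biary-trees-X-MND}), and the obstacle you yourself flag --- getting the parity correction to line up simultaneously across all $\ell$ blocks --- is real and is precisely why the paper does \emph{not} do this here.

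What the paper does instead: the map $z$ peels off only the leftmost child $r_1$ of the root at each step (a binary split: the subtree $B$ rooted at $r_1$ versus the rest $A$), and maintains throughout the induction the single invariant
\[
\mathrm{ldr}(z(T))\text{ is odd} \iff \text{the root of }T\text{ has a leaf among its children.}
\]
This invariant is exactly what controls whether the element $n$ contributes an extra non-overlapping descent when gluing $z(A)$ and $z(B)$, and it matches exactly whether $r_1$ is a marked vertex. Maintaining it forces a five-case definition of $z$ (root has one child with/without a leaf grandchild; leftmost child is a leaf with the rest leaf-free; root leaf-free with nontrivial $A,B$; root has a leaf with nontrivial $A$), and no post-composition with $g$ or $f$ is needed. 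Your sketch never isolates this invariant, and without it the ``parity bookkeeping'' you allude to has nothing to hook onto.
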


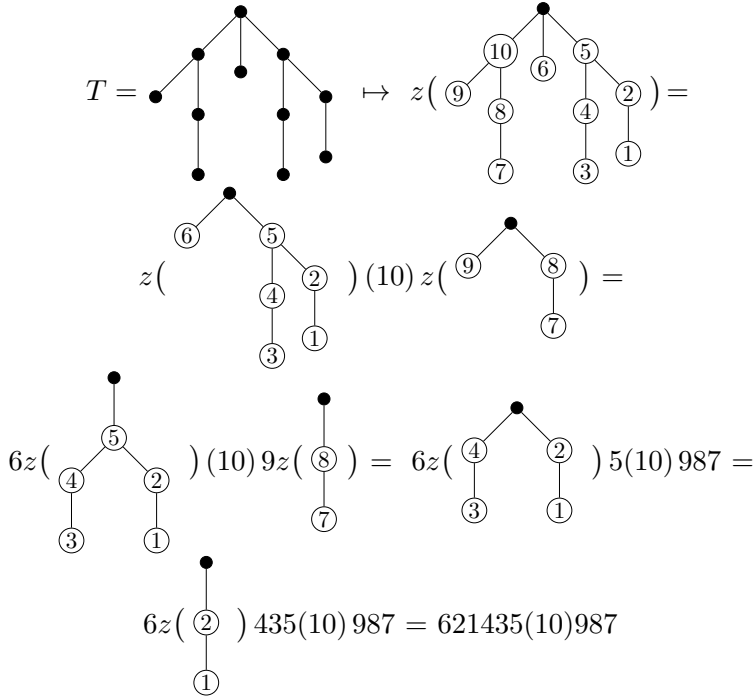
\begin{figure}[h]
\begin{center}
\begin{tabular}{ccccc}
$T=$ & 
\hspace{-0.5cm} 
\begin{minipage}[c]{8em}\scalebox{0.8}{
\begin{tikzpicture}[node distance=1cm,auto,main node/.style={circle,draw,inner sep=1pt,minimum size=2pt}]

\node[style={circle,fill,draw,inner sep=2pt,minimum size=2pt}] (11) {};
\node[style={circle,fill,draw,inner sep=2pt,minimum size=2pt}] (10) [below left of=11] {};
\node[style={circle,fill,draw,inner sep=2pt,minimum size=2pt}] (6) [below of=11] {};
\node[style={circle,fill,draw,inner sep=2pt,minimum size=2pt}] (5) [below right of=11] {};
\node[style={circle,fill,draw,inner sep=2pt,minimum size=2pt}] (9) [below left of=10] {};
\node[style={circle,fill,draw,inner sep=2pt,minimum size=2pt}] (8) [below of=10] {};
\node[style={circle,fill,draw,inner sep=2pt,minimum size=2pt}] (7) [below of=8] {};
\node[style={circle,fill,draw,inner sep=2pt,minimum size=2pt}] (4) [below of=5] {};
\node[style={circle,fill,draw,inner sep=2pt,minimum size=2pt}] (3) [below of=4] {};
\node[style={circle,fill,draw,inner sep=2pt,minimum size=2pt}] (2) [below right of=5] {};
\node[style={circle,fill,draw,inner sep=2pt,minimum size=2pt}] (1) [below of=2] {};

\path
(5) edge (4)
(5) edge (2)
(4) edge (3)
(2) edge (1)
(8) edge (7)
(10) edge (9)
(10) edge (8)
(11) edge (10)
(11) edge (6)
(11) edge (5);
\end{tikzpicture}
}
\end{minipage}
& \hspace{-0.7cm} $\mapsto$  & 
\hspace{-0.3cm}
$z$\big(\begin{minipage}[c]{7.5em}\scalebox{0.8}{
\begin{tikzpicture}[node distance=1cm,auto,main node/.style={circle,draw,inner sep=1pt,minimum size=2pt}]

\node[style={circle,fill,draw,inner sep=2pt,minimum size=2pt}] (11) {};
\node[main node] (10) [below left of=11] {10};
\node[main node] (6) [below of=11] {6};
\node[main node] (5) [below right of=11] {5};
\node[main node] (9) [below left of=10] {9};
\node[main node] (8) [below of=10] {8};
\node[main node] (7) [below of=8] {7};
\node[main node] (4) [below of=5] {4};
\node[main node] (3) [below of=4] {3};
\node[main node] (2) [below right of=5] {2};
\node[main node] (1) [below of=2] {1};

\path
(5) edge (4)
(5) edge (2)
(4) edge (3)
(2) edge (1)
(8) edge (7)
(10) edge (9)
(10) edge (8)
(11) edge (10)
(11) edge (6)
(11) edge (5);
\end{tikzpicture}
}
\end{minipage}\hspace{-0.7mm}\big)
& \hspace{-0.5cm} = 
\end{tabular}

\begin{tabular}{ccc}
\hspace{-0.3cm}
$z$\big(\begin{minipage}[c]{8em}\scalebox{0.8}{
\begin{tikzpicture}[node distance=1cm,auto,main node/.style={circle,draw,inner sep=1pt,minimum size=2pt}]

\node[style={circle,fill,draw,inner sep=2pt,minimum size=2pt}] (11) {};
\node[main node] (6) [below left of=11] {6};
\node[main node] (5) [below right of=11] {5};
\node[main node] (4) [below of=5] {4};
\node[main node] (3) [below of=4] {3};
\node[main node] (2) [below right of=5] {2};
\node[main node] (1) [below of=2] {1};

\path
(5) edge (4)
(5) edge (2)
(4) edge (3)
(2) edge (1)
(11) edge (6)
(11) edge (5);
\end{tikzpicture}
}
\end{minipage}
\hspace{-0.8cm}\big)
& \hspace{-0.5cm} (10) & \hspace{-0.5cm}
$z$\big(\begin{minipage}[c]{6em}\scalebox{0.8}{
\begin{tikzpicture}[node distance=1cm,auto,main node/.style={circle,draw,inner sep=1pt,minimum size=2pt}]

\node[style={circle,fill,draw,inner sep=2pt,minimum size=2pt}] (10) {};
\node[main node] (9) [below left of=10] {9};
\node[main node] (8) [below right of=10] {8};
\node[main node] (7) [below of=8] {7};

\path
(8) edge (7)
(10) edge (9)
(10) edge (8);
\end{tikzpicture}
}
\end{minipage}
\hspace{-0.7cm}\big) =

\end{tabular}

\begin{tabular}{cccccc}
\hspace{-0.3cm}
6$z$\big(\begin{minipage}[c]{6.5em}\scalebox{0.8}{
\begin{tikzpicture}[node distance=1cm,auto,main node/.style={circle,draw,inner sep=1pt,minimum size=2pt}]

\node[style={circle,fill,draw,inner sep=2pt,minimum size=2pt}] (11) {};
\node[main node] (5) [below of=11] {5};
\node[main node] (4) [below left of=5] {4};
\node[main node] (3) [below of=4] {3};
\node[main node] (2) [below right of=5] {2};
\node[main node] (1) [below of=2] {1};

\path
(5) edge (4)
(5) edge (2)
(4) edge (3)
(2) edge (1)
(11) edge (5);
\end{tikzpicture}
}
\end{minipage}
\hspace{-0.8cm}\big)
& \hspace{-0.5cm} (10) & \hspace{-0.5cm}
9$z$\big(\begin{minipage}[c]{3em}\scalebox{0.8}{
\begin{tikzpicture}[node distance=1cm,auto,main node/.style={circle,draw,inner sep=1pt,minimum size=2pt}]

\node[style={circle,fill,draw,inner sep=2pt,minimum size=2pt}] (10) {};
\node[main node] (8) [below of=10] {8};
\node[main node] (7) [below of=8] {7};

\path
(8) edge (7)
(10) edge (8);
\end{tikzpicture}
}
\end{minipage}
\hspace{-0.7cm}\big) =
&
\hspace{-0.3cm}
6$z$\big(\begin{minipage}[c]{6.5em}\scalebox{0.8}{
\begin{tikzpicture}[node distance=1cm,auto,main node/.style={circle,draw,inner sep=1pt,minimum size=2pt}]

\node[style={circle,fill,draw,inner sep=2pt,minimum size=2pt}] (5) {};
\node[main node] (4) [below left of=5] {4};
\node[main node] (3) [below of=4] {3};
\node[main node] (2) [below right of=5] {2};
\node[main node] (1) [below of=2] {1};

\path
(5) edge (4)
(5) edge (2)
(4) edge (3)
(2) edge (1);
\end{tikzpicture}
}
\end{minipage}
\hspace{-0.8cm}\big)
& \hspace{-0.5cm} 5(10) & \hspace{-0.5cm}
987 =
\end{tabular}

\begin{tabular}{ccc}
\hspace{-0.3cm}
6$z$\big(\begin{minipage}[c]{3.5em}\scalebox{0.8}{
\begin{tikzpicture}[node distance=1cm,auto,main node/.style={circle,draw,inner sep=1pt,minimum size=2pt}]

\node[style={circle,fill,draw,inner sep=2pt,minimum size=2pt}] (5) {};
\node[main node] (2) [below of=5] {2};
\node[main node] (1) [below of=2] {1};

\path
(5) edge (2)
(2) edge (1);
\end{tikzpicture}
}
\end{minipage}
\hspace{-0.8cm}\big)
& \hspace{-0.5cm} 435(10) & \hspace{-0.5cm}
987 = 621435(10)987
\end{tabular}

\caption{Computing $z(T)$ in the proof of Theorem~\ref{thm-MND} that results in the permutation $\pi=621435(10)987$. Note that ldr($\pi$) is odd, which is consistent with $T$'s root having a leaf as a child, and M($T$)=MND($\pi$)=4 as the marked vertices in $T$ are 2, 4, 8, 10.}\label{example-thm-trees-fig}
\end{center}
\end{figure}

\begin{proof} Let M$(T)$ denote the number of marked vertices in a rooted plane tree $T$. 

We define the map  $z$ from the set of trees to the set of permutations as follows. The one vertex tree (with no edges, $n=0$) is mapped by $z$ to the empty permutation $\varepsilon$. For the rest of the recursive description of $z$ it is convenient to label all non-root vertices in each rooted plane tree using the pre-order traversal (beginning from the root and going in the leftmost available direction) and starting assigning the largest label $n$ (to the leftmost child of the root) and then assigning the labels in the descending order (similarly to our labelling in the proof of Theorem~\ref{thm-biary-trees-X-MND}).  To define the image $z(T)$ for a tree $T$ in the set with $n\geq 1$, we distinguish the following five cases in which we check by induction on $n$, with the trivial base case of $n=0$, that 
\begin{itemize}
\item ldr$(z(T))$ is even if the root of $T$ has no leaf as a child and ldr$(z(T))$ is odd otherwise, and
\item M$(T)=$MND($z(T)$).
\end{itemize}
We refer to Figure~\ref{example-thm-trees-fig} for an example involving all cases. So, the cases are:
\begin{itemize}
\item[(1)] $T$ has a single child $r_1$ labeled by $n$ that is the root of the subtree $A$ and no children of $r_1$ is a leaf. Let $z(T):=z(A)n$. Note that ldr$(z(A))$ is even by the inductive hypothesis, and ldr$(z(T))$ is even too. Also, MND$(z(T))$=MND$(z(A))$=M$(A)$=M$(T)$ since $r_1$ is not marked in~$T$.
\item[(2)] $T$ has a single child $r_1$ labeled by $n$ that is the root of the subtree $A$ and at least one child of $r_1$ is a leaf, so $r_1$ is marked. Let $z(T):=nz(A)$.  Note that ldr$(z(A))$ is odd by the inductive hypothesis, and hence ldr$(z(T))$ is even. Then the element $n$ in $z(T)$ contributes an extra non-overlapping descent, which is consistent with $r_1$ being marked:  MND$(z(T))$=1+MND$(z(A))$=1+M$(A)$=M$(T)$.
\item[(3)] The leftmost child $r_1$ (labelled by $n$) of the root is a leaf, the root has at least one more child, but none of the  other children of the root is a leaf. Let the subtree $A$ be $T$ without $r_1$. Then $z(T):=nz(A)$. Note that ldr$(z(A))$ is even by the inductive hypothesis, and hence ldr$(z(T))$ is odd (which makes the outcome here be different from that in case (2)). Clearly, MND$(z(T))$=MND$(z(A))$=M$(A)$=M$(T)$.
\item[(4)]  The root has no leaves, the leftmost child $r_1$ (labelled by $n$) of the root is the root of the subtree $B$ (with at least one edge), and the rest of $T$ is the subtree $A$ with at least one edge (and root having no leaves as children). Let $z(T):=z(A)nZ(B)$ where $Z(B)$ is formed by the largest elements below $n$. Note that  ldr$(z(T))$=ldr$(z(A))$ is even.  Furthermore,
\begin{itemize}
\item if $r_1$ has no leaf among its children, then ldr($z(B)$) is even and MND$(z(T))$=MND$(z(A))$+MND$(z(B))$=M$(A)$+M$(B)$=M$(T)$;
\item if $r_1$ has a leaf as a child, then ldr($z(B)$) is odd, $r_1$ is marked and MND$(z(T))$=MND$(z(A))$+1+MND$(z(B))$=M$(A)$+1+M$(B)$=M$(T)$.
\end{itemize}
We remark that it is essential for our goals to consider cases (3) and (4) separately rather than allowing $B$ be the one-vertex tree in case~(4). 
\item[(5)] The root has a leaf, the leftmost child $r_1$ (labelled by $n$) of the root is the root of the possibly one-vertex subtree $B$, and the rest of $T$ is the subtree $A$ with at least one edge and root having a leaf as a child. Let $z(T):=z(A)nZ(B)$ where $Z(B)$  is formed by the largest elements below $n$. Note that  ldr$(z(T))$=ldr$(z(A))$ is odd.  Furthermore,
\begin{itemize}
\item if $r_1$ has no leaf among its children, then ldr($z(B)$) is even (possibly 0) and MND$(z(T))$=MND$(z(A))$+MND$(z(B))$=M$(A)$+M$(B)$ =M$(T)$;
\item if $r_1$ has a leaf as a child, then ldr($z(B)$) is odd, $r_1$ is marked and MND$(z(T))$=MND$(z(A))$+1+MND$(z(B))$=M$(A)$+1+M$(B)$=M$(T)$.
\end{itemize}
Note that if $B$ is the one-vertex tree in case (5), this case is different from case (3). 
\end{itemize}

It is straightforward to see that the cases above are disjoint and any given rooted plane tree belongs to one of the cases. In particular, if the root of a tree has a single child then we are in case (1) or case (2). Moreover, the map $z$ is clearly injective and surjective (as every 231-avoiding permutation appears exactly once as the image which is easy to see by considering where $n$ is placed) and hence $z$ is a bijection giving the desired result. 
\end{proof}

\section{Directions for further research}\label{open-directions}

In this paper, we find the distribution of the maximum number of non-overlapping descents and ascents over S-S permutations, which are precisely 231-avoiding permutations. Using trivial bijections on permutations (reverse, complement, and their composition), our results provide the distribution of MND and MNA (given by (\ref{231-MND})) over $p$-avoiding permutations, where $p\in\{132, 213, 231, 312\}$.  Additionally note that MND on 321-avoiding permutations (equivalently, MNA on 123-avoiding permutations) is given by (\ref{321-des}), because 321-avoiding permutations do not have occurrences of overlapping descents. However, finding distribution of MNA on 321-avoiding permutations is still an open problem (see Table~\ref{tab-MNA-321} for the respective distribution).

\begin{table}
\begin{center}
\begin{tabular}{|c|c|c|c|c|c|c|c|}
\hline
\diagbox{$n$}{$k$}&  0  & 1  & 2 &  3 & 4 & 5 & 6 \\
\hline
1&  1 & & & & & & \\
\hline
2&  1 & 1 & & & & & \\
\hline
3& 0 & 5 & & & & & \\
\hline
4 & 0  & 8 & 6 & & & & \\
\hline
5&  0 & 5  & 37 & & & & \\
\hline
6& 0  & 0 & 89 & 43 & & & \\
\hline
7&  0 & 0 &98  & 331& & & \\
\hline
8 & 0  & 0 & 42 & 1036& 352 & & \\
\hline
9 &  0 & 0 & 0 & 1644 & 3218 & & \\
\hline
10 &  0 & 0 &0  & 1320 & 12362 & 3114 & \\
\hline
11 &  0 & 0 & 0 & 429 & 25498 & 32859 & \\
\hline
12  &  0 & 0 & 0 & 0 & 29744 & 149264 & 29004 \\
\hline
\end{tabular}
  \caption{Distribution of MNA (resp., MND) on 321-avoiding (resp., 123-avoiding) permutations, where $n$ is the length of permutations and  $k$ is the number of occurrences of the statistic.}
\label{tab-MNA-321}
\end{center}
\end{table}

Open directions for research include finding distributions of MND and MNA over other (pattern-avoiding) classes of permutations, in particular, over 2-stack sortable permutations (permutations sortable by two applications of the  stack-sorting operator $\mathcal{S}$). 

\vskip 3mm
\noindent {\bf Acknowledgments.}
The first author is grateful to Tianjin Normal University for its hospitality. The work of the second author was supported by the National Science Foundation of China (No. 12171362).

\end{document}